\tikzset{vertex/.style={fill,circle,inner sep=.06cm}}
\newcommand{\N}{\mathbb N}
\newtheorem{theorem}{Theorem}[section]
\newtheorem{thm}[theorem]{Theorem}
\newtheorem{lemma}[theorem]{Lemma}
\title{Online Ramsey numbers of ordered graphs}
\author{Emily Heath\thanks{California State Polytechnic University Pomona, Pomona, CA 91768. E-mail: {\tt eheath@cpp.edu}. Research supported in part by NSF
RTG Grant DMS-1839918.} \and Dylan King\thanks{California Institute of Technology, Pasadena, CA 91125. E-mail: {\tt dking@caltech.edu}.} \and Grace McCourt\thanks{Iowa State University, Ames, IA 50011. E-mail: {\tt gmccourt@iastate.edu}. Research supported in part by NSF RTG grant DMS-1937241.} \and Hannah Sheats \thanks{The Ohio State University, Columbus, OH 43210. E-mail: {\tt sheats.6@osu.edu}.} \and Justin Wisby \thanks{Florida International University,  Miami, FL 33199. E-mail: {\tt jwisb001@fiu.edu}.}}
\begin{document}

\maketitle

\begin{abstract}
    The online ordered Ramsey game is played between two players, Builder and Painter, on an infinite sequence of vertices with ordered graphs $(G_1,G_2)$, which have linear orderings on their vertices. On each turn, Builder first selects an edge before Painter colors it red or blue. Builder's objective is to construct either an ordered red copy of $G_1$ or an ordered blue copy of $G_2$, while Painter's objective is to delay this for as many turns as possible. The online ordered Ramsey number $r_o(G_1,G_2)$ is the number of turns Builder takes to win in the case that both players play optimally.

    Few lower bounds are known for this quantity. In this paper, we introduce a succinct proof of a new lower bound based on the maximum left- and right-degrees in the ordered graphs. We also upper bound $r_o(G_1,G_2)$ in two cases: when $G_1$ is a cycle and $G_2$ a complete bipartite graph, and when $G_1$ is a tree and $G_2$ a clique.
\end{abstract}

\section{Introduction}\label{sec:Introduction}

The {\it Ramsey number} $r(G_1,G_2)$, defined as the least $N$ so that any red/blue edge-coloring of the complete graph $K_N$ contains either a monochromatic red $G_1$ or blue $G_2$, is a foundational function in combinatorics, but still an active area of research; see for example the recent breakthroughs on the asymptotics of $r(K_t,K_t)$ by Campos, Griffiths, Morris, and Sahasrabudhe \cite{campos2023ramsey} or $r(K_4,K_t)$ by Mattheus and Verstraete \cite{mattheus2024r4}. For a wider survey of Ramsey numbers (and several of the variants we discuss here) see \cite{conlon2015survey}. One such extensively studied variant is the {\it size Ramsey number} $\hat{r}(G_1,G_2)$, defined as the least number of edges $e(H)$ in a graph $H$ which has the property that any red/blue edge-coloring of $H$ contains either a monochromatic red $G_1$ or blue $G_2$. Note that throughout the paper, any mention of a coloring of a graph will be an edge coloring. 

One further modification of the size Ramsey number introduces a game-theoretic element. The online Ramsey game for $(G_1,G_2)$ is played between two players, Builder and Painter, on an infinite vertex set, say $\mathbb{N}$. On each turn, Builder first selects an edge before Painter colors it red or blue. Builder's objective is to construct either a red copy of $G_1$ or a blue copy of $G_2$, while Painter's objective is to delay this for as many turns as possible. Since Builder may always choose to build a sufficiently large clique, the game is finite by Ramsey's Theorem. 
The {\it online Ramsey number} $\tilde{r}(G_1,G_2)$ is the number of turns Builder takes to win in the case that both players play optimally. Online Ramsey questions were introduced independently in \cite{kurek2005size} and \cite{beck1996online} and furthered studied in, nonexhaustively, \cite{adamski2022,conlon2010online,conlon2018online,cyman2015,DYBIZBANSKI2020online,grytczuk2008}.

Another family of Ramsey-type questions arises when a linear ordering is introduced on the vertex set. An ordered graph $G=(V,E)$ takes the vertex set of the integers $V=\{1,2,\dots,|V(G)|\}$. We say that an $n$-vertex ordered graph $G$ is contained in an $m$-vertex ordered graph $H$ if there is an injective function $f$ from $[n]$ to $[m]$ so that $ij\in E(G)$ implies $f(i)f(j) \in E(H)$ and $i<j$ implies $f(i) < f(j)$. For a general survey of extremal problems for ordered graphs see \cite{tardos2006survey} or, in the bipartite case, \cite{pach2006ordered}. For ordered Ramsey problems, instead of looking merely for a monochromatic copy of $G_1$ or $G_2$, we also require that the vertices appear in the correct order. The {\it ordered Ramsey number}, $r_{<}(G_1,G_2)$, is defined as the least $N$ so that any red/blue coloring of the ordered complete graph $K_N$ contains either an ordered monochromatic red $G_1$ or blue $G_2$. As an example, let $P_n$ be the monotone ordered path on $n$ vertices. Then the well-known Erd\H{o}s-Szekeres Theorem \cite{erdos_szekeres} showed that $r_<(P_n,P_m)=(n-1)(m-1)+1$; for more modern results on $r_<$ see the works of Conlon, Fox, Lee, and Sudakov  \cite{conlon2016ordered} or Balko, Cibulka, Kr\'al, and Kyn\v cl \cite{balko2013}. The {\it ordered size Ramsey number} $\hat{r}_{<}(G_1,G_2)$ is, similarly, the least number of edges $e(H)$ in an ordered graph $H$ which has the property that any red/blue coloring of $H$ contains either an ordered monochromatic red $G_1$ or ordered blue $G_2$. As it relates closely to the Erd\H{o}s-Szekeres Theorem, the ordered size Ramsey number of directed paths (again ordered monotonically) has been intensively studied in \cite{balogh2019ordered, beneliezer2010size, gishboliner2023ramsey, letzter2017oriented}.

In this paper, we will consider the online ordered Ramsey number, which was first introduced by \cite{balogh2021strengthening} and \cite{perezgimenez2018online}, who presented results on paths. Builder and Painter play again on vertex set $\N$, with two ordered graphs $G_1$ and $G_2$ for Builder to construct and Painter to avoid. The {\it online ordered Ramsey number} $r_o(G_1,G_2)$ is the number of turns Builder takes to win in the case that both players play optimally.

When $G_1$ and $G_2$ are complete there is no distinction between ordered and unordered problems; in this case we see from the unordered online problems the following result of Conlon, Fox, Grinshpun and He \cite{conlon2018online}.

\begin{thm}[Corollaries 2 and 3 in \cite{conlon2018online}]\label{thm:online_complete}
    We have \begin{align*}
        \tilde{r}(K_n,K_n) &\geq 2^{(2-\sqrt{2})n-O(1)}\\
        \tilde{r}(K_m,K_n) &\geq n^{(2-\sqrt{2})m-O(1)}
    \end{align*}
    where the second holds for $m\geq 3$ and $n$ sufficiently large depending on $m$.
\end{thm}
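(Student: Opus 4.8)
Because the statement is a lower bound on an online Ramsey number, the plan is to exhibit a \emph{Painter} strategy and argue that it survives many rounds: a Painter strategy that avoids a monochromatic $K_n$ through the first $N-1$ edges shows $\tilde r(K_n,K_n)\ge N$. This is the dual of how one proves upper bounds (where one instead exhibits a Builder strategy). The cleanest vehicle for such an argument is a potential (weight) function on the evolving two-colored graph, and since the two displayed inequalities are precisely Corollaries 2 and 3 of \cite{conlon2018online}, I would reconstruct the Painter strategy underlying their subgraph-query framework.

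First I would attach to each monochromatic clique in the current coloring a weight: a red clique on $k$ vertices receives weight $\alpha^{k}$ and a blue clique on $k$ vertices receives weight $\beta^{k}$, where $\alpha,\beta>1$ are parameters fixed later (for the symmetric bound $\tilde r(K_n,K_n)$ one takes $\alpha=\beta$, while the asymmetric bound forces $\alpha\ne\beta$). The potential $\Phi$ is the sum of these weights over all monochromatic cliques of every size, including singletons and edges. Painter's rule is greedy: when Builder presents an edge, color it with whichever color produces the smaller increase of $\Phi$.

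Two facts then need to be proved. \emph{(Threshold.)} The moment a red $K_n$ (resp.\ blue $K_n$) appears, $\Phi\ge\alpha^{n}$ (resp.\ $\beta^{n}$), since that single clique already contributes this much; so Builder cannot win until $\Phi$ has grown exponentially large. \emph{(Rate.)} Each round increases $\Phi$ by a controlled amount: the increase caused by coloring the new edge $uv$ red equals $\alpha^{2}\sum_{C}\alpha^{|C|}$, summed over red cliques $C$ lying in the common red-neighborhood $N_r(u)\cap N_r(v)$ (the term $C=\emptyset$ accounts for the edge $uv$ itself), and symmetrically for blue; Painter's greedy choice takes the smaller of the two. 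The lower bound then reads (number of rounds) $\ge\Phi_{\text{final}}/(\text{maximum per-round increase})$, and choosing $\alpha$ (and the relation between $\alpha$ and $\beta$) to optimize the resulting quantity is what produces the constant $2-\sqrt2$.

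The main obstacle is the Rate step: a single edge can complete many cliques simultaneously, so the worst-case one-round increase of $\Phi$ is not obviously bounded by anything small. The heart of the argument is therefore an amortized analysis showing that Painter's greedy choice keeps the two local weights (the red-extension weight and the blue-extension weight) balanced, so that over the whole game the accumulated potential cannot have been bought cheaply. This self-balancing is exactly where a quadratic relation between the red- and blue-growth rates enters, and solving it is what yields $\sqrt2$ and hence the exponent $2-\sqrt2$. For the asymmetric statement one repeats the optimization with $\alpha\ne\beta$, capping the red contributions at clique-size $m$, so that with $m$ fixed and $n\to\infty$ the bound becomes a polynomial in $n$ of degree $(2-\sqrt2)m$.
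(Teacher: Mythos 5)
Your proposal has a genuine gap, and it sits exactly where you placed it: the ``Rate'' step. The inequality (number of rounds) $\ge \Phi_{\text{final}}/(\text{maximum per-round increase})$ is only useful if the per-round increase is bounded by something small, and it is not. When Builder presents an edge $uv$, the only a priori bounds are $\Delta\Phi_{\text{red}} \le \alpha^2(1+\Phi)$ and $\Delta\Phi_{\text{blue}} \le \beta^2(1+\Phi)$, and Builder can arrange for \emph{both} quantities to be comparable to $\Phi$ itself (choose $u,v$ whose common red neighborhood and common blue neighborhood each contain large monochromatic cliques); then Painter's greedy minimum does not help. With only a multiplicative per-round bound, the potential satisfies $\Phi_t \le (C)^t$ for a constant $C$, and the threshold $\Phi \ge \alpha^n$ yields merely $t = \Omega(n)$ --- a bound that is \emph{exponentially} weaker than the claimed $2^{(2-\sqrt{2})n-O(1)}$. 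Your proposed repair, an ``amortized analysis showing that Painter's greedy choice keeps the two local weights balanced,'' is not an argument but a restatement of what would need to be proved: quantifying how many prior edges Builder must spend to create a position in which both color choices are expensive \emph{is} the theorem. Asserting that a ``self-balancing quadratic relation'' produces $\sqrt{2}$ does not identify the relation, let alone solve it, and the same objection applies verbatim to your asymmetric variant with $\alpha \ne \beta$.

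It is also worth noting that this is not how the bound is actually proved. The statement is quoted from \cite{conlon2018online}, and, as the paper itself remarks in Section~\ref{sec:Introduction}, that proof ``relied on a random coloring'': Painter colors each queried edge independently at random (uniformly in the symmetric case, with bias in the asymmetric case), and the work goes into a probabilistic analysis over Builder's adaptive query tree, showing that with high probability no monochromatic $K_n$ can be completed within $2^{(2-\sqrt{2})n-O(1)}$ queries. The constant $2-\sqrt{2}$ emerges from optimizing the trade-off in Builder's clique-building strategies against this random Painter, not from a greedy potential function. A deterministic Erd\H{o}s--Selfridge-style Painter of the kind you describe is a reasonable instinct, but no such derandomization achieving the $2-\sqrt{2}$ exponent is known, and your outline gives no evidence that one exists.
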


Instead, we focus on the case when at least one of $G_1$ and $G_2$ is relatively sparse, but first we will establish some basic notions pertaining to ordered graphs. Ordered graphs are equipped with notions of left degree ($d^-$) and right degree ($d^+$); their maxima are denoted $\Delta^-$ and $\Delta^+$. The {\it interval-chromatic number}, $\chi_<$, of an ordered graph $G$ is the least $k$ for which the vertices of $G$ may be partitioned into $k$ consecutive intervals, each of which is an independent set. Unless otherwise stated, in an ordered setting the path $P_n$ and cycle $C_n$ are given the `natural' ordering as $v_1<v_2<\dots<v_n$ with edge sets $\{v_1v_2,v_2v_3,\dots,v_{n-1}v_{n} \}$ and $\{v_1v_2,v_2v_3,\dots,v_{n-1}v_{n},v_{n}v_{1}\}$ respectively. Similarly, the complete bipartite graph $K_{m,n}$ has vertex set $[m+n]$ with  edge $ij$ when $1\leq i \leq m$ and $m+1 \leq j \leq n$. See Figure~\ref{fig:typically-ordered-paths} for examples.

\begin{figure}[htb]
\centering
\begin{tikzpicture}[scale=.8]
\node[vertex] (v0) at (0,0) {};
\foreach[count=\xi from 0] \x in {1,...,4}{
\node[vertex] (v\x) at (\x,0) {};
\draw (v\xi)--(v\x);
}
\end{tikzpicture}
\hspace{.5in} 
\begin{tikzpicture}[scale=.8]
\node[vertex] (v0) at (0,0) {};
\foreach[count=\xi from 0] \x in {1,...,4}{
\node[vertex] (v\x) at (\x,0) {};
\draw (v\xi)--(v\x);
}
\draw(v0)to[out=30,in=150](v4);
\end{tikzpicture}
\hspace{.5in}
\begin{tikzpicture}[scale=.8]
\foreach[] \x in {0,1,2,4,5,6}{
\node[vertex] (v\x) at (\x,0) {};
}
\foreach[] \x in {0,1,2}{
\foreach[] \y in {4,5,6}{
\draw(v\x)to[out=30-\x*5+\y,in=150+5*\x](v\y);
}
}
\end{tikzpicture}
\caption{Ordered path $P_5$, cycle $C_5$, and complete graph $K_{3,3}$}
\label{fig:typically-ordered-paths}
\end{figure}
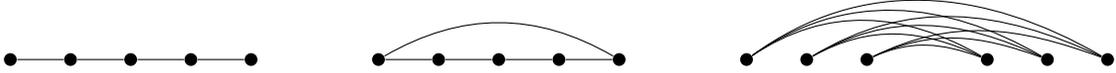

In \cite{clemen2022online}, Clemen, Heath, and Lavrov showed that $r_o(G,P_n)\leq \Delta^-(G)|V(G)|n\log n$ for any ordered graph $G$.  Moreover, they showed that for any ordered $G$, there is a constant $c$ such that $r_o(G, C_n)\leq r_o(G, P_m)+c$, where $m =(n-1)(|V(G)|-1)+1$. They also studied the specific case of paths versus paths,  $r_o(P_m,P_n)\leq O(nm\log m)$. In addition, if $G$ is 3-interval-chromatic, then $r_o(G,P_n) = O(n|V(G)|2 \log |V(G)|)$. On the other hand, \cite{balogh2021strengthening} shows $r_o(P_n,P_n)=\Omega(n\log n)$, and the gap of order $n$ between the upper and lower bounds for $r_o(P_n,P_n)$ remains one of the more tantalizing questions in this area.

Our first result adds another technique to the relatively short list of known lower bound methodologies for online Ramsey games, which we briefly summarize here. The proof of Theorem \ref{thm:online_complete} relied on a random coloring, effective for dense graphs but less so when at least one of $G_1$ or $G_2$ is sparse. (It is not hard to see, for example, that a monochromatic copy of any $d$-degenerate graph on $n$ vertices can be naively obtained by Builder in $2^dn$ turns, in expectation, when Painter plays uniformly at random.) In \cite{balogh2021strengthening}, the lower bound on $r_o(P_n,P_n)$ is obtained by defining a specific class of colorings which do not completely avoid monochromatic $P_n$ but control their appearance, and then observing that each choice of Painter can be made so that at least half of these colorings are still `viable.' 
In both \cite{clemen2022online} and \cite{grytczuk2008}, the lower bounds are derived from algorithms of the following form: for some appropriate set $X$ of subgraphs, color edges with red unless a graph from $X$ is formed in red, in which case use color blue instead. The technical hurdle in these arguments is typically determining the structure required in a large monochromatic blue graph.

It is natural to ask if the techniques used in \cite{balogh2021strengthening} might be generalized from $P_n$ to other ordered trees. In some sense, the two-sided star on $2n+1$ vertices, $S_{n,n}$, lies opposite of paths in the class of ordered trees. See Figure~\ref{fig:single-rooted-star} for example. 

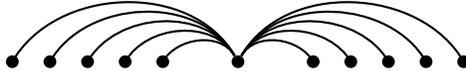
\begin{figure}[htb]
\centering
\begin{tikzpicture}
\node[vertex] (root1)at (0,0) {};
\foreach[count=\xi] \x in {1,...,5}{
\draw[thick] (root1)to[out=120,in=60](-.5-.5*\x,0) node[vertex]{};
\draw[thick] (root1)to[out=60,in=120](.5+ .5*\x,0) node[vertex]{};
}
\end{tikzpicture}
\caption{Two-sided star $S_{5,5}$ }
\label{fig:single-rooted-star}
\end{figure}

The following is proved in \cite{balko2013}, where the focus is on $r_<$ and we include $r_o$ for context. 

\begin{thm}[Theorem 2 and Proposition 14 in \cite{balko2013}]\label{thm:sym_star}
    For $n \geq 2$, we have 
    \begin{equation*}
        r_o(S_{n,n},S_{n,n}) \leq \binom{r_<(S_{n,n},S_{n,n})}{2} \leq \binom{16(2n+1)}{2} \leq 1152n^2.
    \end{equation*}
    For the lower bound, 
    \begin{equation*}
        r_<(S_{n,n},S_{n,n}) >  8n-12.
    \end{equation*}
\end{thm}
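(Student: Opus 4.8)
The plan is to peel off the three routine links in the displayed chain and concentrate on the two genuinely combinatorial claims, namely that $r_<(S_{n,n},S_{n,n})$ is $O(n)$ from above and exceeds $8n-12$ from below. The first inequality, $r_o(S_{n,n},S_{n,n})\le\binom{r_<(S_{n,n},S_{n,n})}{2}$, holds for every pair of ordered graphs and asks nothing clever of Builder: putting $N=r_<(S_{n,n},S_{n,n})$, Builder simply plays all $\binom N2$ edges of the ordered clique $K_N$, one per turn, and by the definition of $r_<$ the resulting coloring must contain an ordered monochromatic $S_{n,n}$. The last inequality is the purely arithmetic statement $\binom{16(2n+1)}{2}\le 1152n^2$ for $n\ge2$, checked by expanding to $512n^2+496n+120$ and comparing. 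Throughout I use that an ordered monochromatic $S_{n,n}$ in a color is exactly a vertex $v$ with at least $n$ edges of that color to its left and at least $n$ to its right, and I write $r^-(v),r^+(v),b^-(v),b^+(v)$ for the left/right red/blue degrees of $v$.

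For the upper bound $r_<(S_{n,n},S_{n,n})=O(n)$ I would use a majority-color and counting scheme. Call a vertex \emph{central} if it has at least $2n-1$ neighbors on each side, i.e. it sits in positions $2n\le p\le N-2n+1$, so there are $M=N-4n+2$ of them. On any side with $\ge 2n-1$ edges some color occurs $\ge n$ times; if the same color is heavy on both sides of a central $v$ we obtain a monochromatic $S_{n,n}$ centered at $v$, so I may assume each central vertex is of type $RB$ (with $r^-(v)\ge n$ and $b^+(v)\ge n$, and hence $r^+(v)\le n-1$ and $b^-(v)\le n-1$) or the mirror type $BR$. I then count the edges among the central vertices by charging each to a \emph{bounded} degree: a red edge $uv$ with $u<v$ and $u$ of type $RB$ costs one unit of the budget $r^+(u)\le n-1$, a blue such edge with $v$ of type $RB$ costs $b^-(v)\le n-1$, and symmetrically for type $BR$. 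The only uncharged edges are the `free' ones — pairs $(RB,BR)$ colored blue and $(BR,RB)$ colored red — of which there are at most $xy\le M^2/4$, where $x,y$ count the two types. Since the chargeable edges number at most $2(n-1)M$, we get $\binom M2\le 2(n-1)M+M^2/4$, forcing $M=O(n)$ and hence $N=O(n)$; the explicit constant $16(2n+1)$ is the bound recorded in \cite{balko2013}.

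For the lower bound I would exhibit a red/blue coloring of $K_N$ with $N=8n-12$ in which no vertex is a monochromatic center, that is, every vertex satisfies $\min(r^-(v),r^+(v))\le n-1$ and $\min(b^-(v),b^+(v))\le n-1$, so it is \emph{one-sided} in each color. The guiding principle is to split $[N]$ into a bounded number of consecutive blocks and to assign each vertex a side on which red is capped and a side on which blue is capped; a vertex with many neighbors on both sides must cap the two colors on \emph{different} sides, so interior vertices should be ``red-heavy left, blue-heavy right'' or the reverse. Concretely one designates a left part whose vertices are red-capped on the right and blue-capped on the left, a right part with the mirror roles, colors the edges between the parts with the single color that respects both caps, and colors the edges \emph{inside} each part so that the two relevant one-sided degrees stay below $n$. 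Checking that no vertex becomes a monochromatic center is then a finite degree computation.

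The hard part is the internal coloring in that last step, and it is also where the exact constant is decided. Capping two opposite one-sided degrees inside a block is equivalent to covering the edges of a clique by ``red in-stars'' and ``blue out-stars'' of size at most $n-1$; since each vertex offers only $2(n-1)$ units of budget while a clique on $m$ vertices has $\binom m2$ edges, such a block can have size only a constant multiple of $n$, and that constant is delicate because of boundary vertices that are automatically safe and therefore cost no budget. Pinning it down — choosing the block sizes and treating the few exceptional near-boundary vertices so that the assembled coloring reaches exactly $N=8n-12$ rather than a smaller multiple of $n$ — is the quantitative heart of the argument, and is the content of Proposition 14 of \cite{balko2013}. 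The easy clique-building inequality and the arithmetic then close the displayed chain.
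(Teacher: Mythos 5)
The paper does not actually prove this statement: it is quoted from \cite{balko2013} (Theorem~2 supplies $r_<(S_{n,n},S_{n,n})\le 16(2n+1)$ and Proposition~14 the lower bound), and the only content the paper adds is the standard observation $r_o(G_1,G_2)\le\binom{r_<(G_1,G_2)}{2}$ together with arithmetic. Your handling of those two links --- Builder plays every edge of the ordered clique on $r_<(S_{n,n},S_{n,n})$ vertices, and $\binom{32n+16}{2}=512n^2+496n+120\le 1152n^2$ for $n\ge 2$ --- is exactly what the paper implicitly relies on, and your deferral of the $8n-12$ lower bound to Proposition~14 matches the paper's own citation, so nothing is missing relative to the paper (though one should be clear that your lower-bound discussion is a plan, not a proof). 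Where you genuinely diverge is the upper bound on $r_<$: instead of citing it, you give a self-contained majority-color and charging argument, and it checks out. Every central vertex (at least $2n-1$ neighbors on each side, so $M=N-4n+2$ of them) must be of type $RB$ or $BR$ on pain of being a monochromatic center; each such vertex carries only $2(n-1)$ units of capped one-sided degree; the uncharged mixed-type edges number at most $M^2/4$; hence $\binom{M}{2}\le 2(n-1)M+M^2/4$, which simplifies to $M\le 8n-6$ and so $r_<(S_{n,n},S_{n,n})\le 12n-7$. This is in fact \emph{stronger} than the quoted $16(2n+1)=32n+16$, so the middle inequality of the displayed chain follows by monotonicity of $\binom{\cdot}{2}$. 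In short, your route makes the upper half of the theorem self-contained (with a better constant), whereas the paper's route is pure citation; the citation's only advantage is brevity and fidelity to the stated constant.
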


In our first result, we give an online algorithm for Painter to avoid a two-sided star $S_{n,n}$. Furthermore, the resulting lower bound is effective for any pair of graphs with large maximum degree.

\begin{thm}\label{thm:degree_lb}
    For ordered graphs $G_1$ and $G_2$, we have
    \begin{equation*}
        r_o(G_1,G_2) \geq \frac{1}{4}\min\{\Delta^-(G_1)(\Delta^-(G_1)-1),\Delta^+(G_2)(\Delta^+(G_2)-1)\}.
    \end{equation*}
\end{thm}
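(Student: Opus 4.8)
The plan is to exhibit a single explicit Painter strategy and count the edges Builder must draw before winning against it. Write $d_1 = \Delta^-(G_1)$ and $d_2 = \Delta^+(G_2)$, and for a partially colored board let $\rho(v)$ denote the current \emph{red} left-degree of $v$, i.e.\ the number of already-drawn red edges $\{u,v\}$ with $u<v$. Painter's strategy is deliberately one-sided: when Builder presents an edge $\{i,j\}$ with $i<j$, color it red if $\rho(j)\le d_1-2$, and color it blue otherwise. The whole argument rests on the invariant this rule maintains throughout the game, namely that every vertex has red left-degree at most $d_1-1$.

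Given the invariant, Builder can never complete a red $G_1$. Indeed $G_1$ has a vertex $w$ of left-degree exactly $d_1$, and in any order-preserving embedding the $d_1$ left-neighbors of $w$ map to $d_1$ red neighbors lying to the left of the image of $w$, forcing that image vertex to have red left-degree $\ge d_1$ and contradicting the invariant. Hence whenever Builder wins, the terminal board contains a blue copy of $G_2$; in particular, applying the symmetric observation to the right-degree-$d_2$ vertex of $G_2$, it contains a vertex $i^*$ whose blue right-degree is at least $d_2$.

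The heart of the proof is then a disjointness count. Fix $d_2$ of the blue right-edges at $i^*$, say $\{i^*,j_1\},\dots,\{i^*,j_{d_2}\}$ with the $j_t$ distinct and each $j_t>i^*$. Each of these was colored blue, so by the rule its right endpoint satisfied $\rho(j_t)=d_1-1$ at the moment it was drawn; thus $j_t$ already carried at least $d_1-1$ red edges to its left. For $t\ne t'$ these families of red edges are disjoint, since they have distinct right endpoints $j_t\ne j_{t'}$, and all of them differ from the $d_2$ blue edges (those are blue). Summing, Builder has drawn at least $d_2(d_1-1)+d_2=d_1d_2$ distinct edges, so $r_o(G_1,G_2)\ge \Delta^-(G_1)\,\Delta^+(G_2)$. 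This already implies the stated inequality: since $\min\{d_1(d_1-1),d_2(d_2-1)\}\le \min\{d_1,d_2\}^2\le d_1 d_2$, we get $r_o(G_1,G_2)\ge d_1 d_2 \ge \tfrac14\min\{d_1(d_1-1),d_2(d_2-1)\}$.

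The delicate point is not any estimate but the \emph{design} of the strategy: making it asymmetric so that controlling red left-degrees alone rules out one monochromatic target outright and funnels Builder entirely into the blue target, where each blue edge is forced only after its right endpoint has been red-saturated. The one step requiring genuine care is the disjointness bookkeeping, verifying that the red saturation edges at the endpoints $j_t$ can be neither shared across different $t$ nor confused with the blue edges themselves. I would also dispose at the outset of the degenerate cases $d_1\le 1$ or $d_2\le 1$, where the claimed bound is $0$ and there is nothing to prove.
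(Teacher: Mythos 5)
Your proof is correct, and it takes a genuinely different route from the paper's --- in fact it establishes a stronger bound. The paper's Painter is \emph{oblivious} to the target graphs: it colors an edge $ij$ (with $i<j$) blue when the current right-degree of $i$ is less than the current left-degree of $j$, and red otherwise; a red $G_1$ then forces the left-neighbors $x_1,\dots,x_{\Delta^-}$ (taken in order of appearance) of the maximum-left-degree vertex to satisfy $d^+(x_k)\geq k-1$, so the board carries at least $\tfrac14\Delta^-(\Delta^--1)$ edges, and the symmetric count for blue is what produces the minimum and the factor $\tfrac14$. Your Painter, by contrast, is tailored to $G_1$: the threshold on red left-degree makes a red $G_1$ impossible outright, so Builder can only win in blue, and each of the $\Delta^+(G_2)$ blue right-edges at $i^*$ gets charged to $\Delta^-(G_1)-1$ red edges ending at its right endpoint. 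Your disjointness bookkeeping is sound: every edge is a left-edge of exactly one vertex (its larger endpoint), so the red families attached to distinct $j_t$ cannot overlap, and they are trivially disjoint from the blue edges. The payoff is the product bound $r_o(G_1,G_2)\geq \Delta^-(G_1)\,\Delta^+(G_2)$, which implies the stated theorem with constant $1$ in place of $\tfrac14$ and is much stronger when the two maximum degrees have different orders: for $\Delta^-(G_1)=2$ and $\Delta^+(G_2)=n$ your bound is $2n$, while the paper's minimum is below $1$. What the paper's approach buys in exchange is that its single Painter strategy is universal --- it requires no knowledge of $G_1$ or $G_2$ and handles both colors symmetrically --- whereas yours must be re-instantiated with the value $\Delta^-(G_1)$ for each pair. (One cosmetic remark: the rule only guarantees $\rho(j_t)\geq \Delta^-(G_1)-1$ at the moment the blue edge is drawn; the equality you assert follows from your invariant but is not needed.)
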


Our next two results are upper bounds for $r_o(G_1,G_2)$ when $G_1$ is relatively sparse: either a cycle, $C_k$, against a complete bipartite graph, $K_{n,n}$, or an arbitrary ordered tree, $T$, against a complete graph, $K_n$.

\begin{thm}\label{thm:cycle-and-bipartite}
    We have $r_o(C_k,K_{n,n}) \leq 2n^3+(k-3)n(2n-1)+n^2$.
\end{thm}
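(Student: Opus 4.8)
The plan is to exhibit an explicit Builder strategy and bound the number of moves it needs against any Painter. Two elementary reductions frame everything. By the definition of ordered containment, an ordered red $C_k$ is precisely a red monotone path $u_1<u_2<\cdots<u_k$ (all edges $u_iu_{i+1}$ red) together with the red ``closing'' edge $u_1u_k$ between its extreme endpoints; so Builder wins in red as soon as it produces a monotone red path on $k$ vertices whose two ends are joined in red. Likewise, an ordered blue $K_{n,n}$ is precisely a set of vertices $v_1<\cdots<v_n$ and a set $B=\{b_1<\cdots<b_n\}$ with $v_n<b_1$ such that every cross edge $v_ib_j$ is blue; so Builder wins in blue as soon as $n$ vertices lying to the left of a common $n$-set are each joined in blue to all of that set. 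The entire strategy is arranged so that every edge Painter colors blue is an edge into one fixed target set, so that no blue edge is ever wasted.

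First I would fix a target set $B$ of $n$ vertices placed far to the right, to serve as the right part of a prospective blue $K_{n,n}$, and run the game so that the blue threat is assembled one left-vertex at a time. The governing invariant is that each stage either ends the game with a red $C_k$ or deposits a new vertex $v_t<b_1$ that is joined in blue to \emph{all} of $B$; once $n$ such vertices $v_1<\cdots<v_n$ have been produced, they form the required blue $K_{n,n}$ together with $B$. This per-stage ``red $C_k$ or one new blue left-vertex'' dichotomy is what the outer factor of $n$ in the rewritten bound $n\bigl[2n^2+n+(k-3)(2n-1)\bigr]$ records.

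Within a single stage I would grow a red monotone path and try to close it into $C_k$ using $B$. Starting from a fresh left vertex, Builder extends the path one vertex at a time; to extend from the current right-endpoint $r$ it probes $r$ against targets drawn from a common pool containing $B$, spending at most $2n-1$ probes per extension, since there are only about $2n$ relevant targets. Either Painter colors one probe red and the path grows, or the probes come back blue and $r$ has thereby acquired $n$ blue neighbours inside $B$ and can be taken as the stage's vertex $v_t$. Building the $k-3$ interior edges this way contributes the $(k-3)(2n-1)$ term, while the two end-connections together with the closing attempts against $B$ contribute the remaining $2n^2+n$, the quadratic part reflecting that Painter may force Builder to re-probe against all of $B$ repeatedly. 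Any closing edge that returns blue is itself an edge into $B$, hence counts toward making some vertex a valid $v_t$ rather than being lost; the one-time costs of initializing the pool and, in the final stage, completing any still-missing cross edges of the biclique are absorbed into the $2n^3$ and $n^2$ terms.

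The hard part will be the \textbf{alignment and the simultaneous threat}. Forcing individual vertices to collect many blue right-neighbours is easy, but a blue $K_{n,n}$ needs $n$ distinct left vertices sharing the \emph{same} $n$ blue right-neighbours, which is exactly why all probing must be funneled into the fixed set $B$. The genuinely delicate point is to design each stage so that Painter cannot dodge both objectives at once: a naive closing gadget lets Painter spoil the red cycle with one blue edge while simultaneously spoiling the blue left-vertex with one red edge, so the path's right end must be pre-attached to $B$ (or multiple closing options offered) in a way that converts every non-closing response into a blue edge emanating from a \emph{single} vertex to all of $B$. Verifying this invariant against an arbitrary Painter, checking that the produced vertices $v_1<\cdots<v_n$ really occur in increasing order to the left of $B$, and confirming that the probe count per stage never exceeds $2n^2+n+(k-3)(2n-1)$, is where essentially all the work lies; the cycle-specific input is only the elementary reduction of a red $C_k$ to a red monotone path plus one closing edge.
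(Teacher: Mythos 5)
You have correctly identified the two target structures (a red monotone path plus a closing edge joining its leftmost and rightmost vertices, and $n$ vertices lying wholly to the left of a common $n$-set with all cross edges blue), and your rewriting of the bound as $n\bigl[2n^2+n+(k-3)(2n-1)\bigr]$ is arithmetically consistent with the statement. But there is a genuine gap, and you name it yourself: the stage gadget enforcing the dichotomy ``red $C_k$ or one new vertex joined in blue to all of $B$'' is never constructed, and that gadget \emph{is} the theorem. Moreover, the mechanism you sketch cannot work as stated. For the all-blue fallback to certify a valid left-vertex, every extension's probe set must contain all of $B$ (you probe $2n-1$ targets and need $n$ blue edges into $B$, so all of $B$ is probed each time); but then Painter can answer every extension by coloring red precisely the probe into the rightmost vertex of $B$. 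The path's endpoint then jumps into (or, after the next extension, past) $B$, where it can never serve as a left-vertex of the biclique and where all subsequent blue answers are worthless to the fixed blue target, so Painter exits the stage having conceded nothing. Independently, the closing step falls to a splitting strategy: for each $b\in B$, Painter may either blue the extension probe (a blue edge at the path's \emph{end}) or red it and then blue the single closing edge (a blue edge at the path's \emph{start}); alternating between these ensures no single vertex ever becomes all-blue to $B$, and since $B$ is fixed, these concessions help Builder nowhere else. You flag exactly this danger (``a naive closing gadget lets Painter spoil\dots''), but the remedy you gesture at, pre-attaching the path's end to $B$, is precisely what the deflection above destroys.

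The paper's proof avoids both traps by never fixing a far-right blue target: it works with \emph{sets} of $n$ vertices at every step, so the threatened blue $K_{n,n}$ is always local and complete, with different parts each time. Builder first builds $K_{n,2n^2}$, forcing either a blue $K_{n,n}$ outright or a vertex $v$ with $2n$ red right-neighbors $y_1<\dots<y_n<x_1<\dots<x_n$. Inductively, the current level $X_i$ (always $n$ vertices, each reachable from $v$ by a red monotone path of $i$ edges) is joined completely to $2n-1$ fresh vertices to its right; Painter must either complete a blue $K_{n,n}$ whose left part is $X_i$ or hand Builder $n$ red extensions at once, forming $X_{i+1}$. Crucially, the closing move is not a single edge but an entire $K_{n,n}$ between the reserved vertices $y_1,\dots,y_n$ and the last level: all-blue loses immediately, and any red edge closes a red $k$-cycle through $v$. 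Because the left and right parts of the threatened biclique change at every step, Painter has no analogue of the splitting or deflection dodges available against a fixed $B$; this set-based, locally-threatening structure is the missing idea, and the costs $2n^3$, $(k-3)n(2n-1)$, and $n^2$ then fall out exactly as claimed. One further caution your own reduction makes visible: the closing edge must join the leftmost and rightmost vertices of the cycle for the copy to be order-isomorphic to the monotone $C_k$, so the positions of the reserved closing set relative to $v$ and to the levels require real care in any writeup, including the paper's.
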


\begin{theorem}\label{thm:tree-and-complete}
For a tree $T$ with $\ell\geq 2$ vertices, we have $r_o(T,K_n)\leq O(n^{\ell})$. 
\end{theorem}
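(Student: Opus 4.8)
The plan is to induct on the number of vertices $\ell$, building a red copy of $T$ by repeatedly attaching leaves while forcing a blue $K_n$ to accumulate whenever Painter declines an attachment. For the base case $\ell=2$, the tree $T$ is a single edge: any red edge immediately completes a red $T$, so Painter is forced to color every edge blue, and Builder simply draws the $\binom{n}{2}$ edges of a clique on $n$ fixed vertices to obtain a blue $K_n$ (and wins instantly if Painter ever plays red). This gives $r_o(T,K_n)\le\binom{n}{2}=O(n^2)$.

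For the inductive step, fix a leaf $v$ of $T$ with unique neighbor $u$, and let $T'=T-v$, a tree on $\ell-1$ vertices. By the inductive hypothesis Builder has a strategy $\mathcal{S}'$ that, in $O(n^{\ell-1})$ moves, forces either a blue $K_n$ (in which case Builder has already won) or a red copy of $T'$. I would invoke $\mathcal{S}'$ about $n$ times to obtain red copies $C_1,\dots,C_n$ of $T'$, writing $u_b$ for the image of $u$ in $C_b$. Once these are built, Builder draws, for every pair $a<b$, the edge $u_a u_b$. If some $u_au_b$ were red, then together with $C_b$ it would complete a red $T$, with $u_a$ playing the role of the leaf $v$ attached by a red edge to the image $u_b$ of its neighbor; hence Painter is forced to color every such edge blue, yielding a blue $K_n$ on $\{u_1,\dots,u_n\}$. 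The total cost is $n\cdot O(n^{\ell-1})+\binom{n}{2}=O(n^{\ell})$, as required.

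For this threat argument to be valid, each copy $C_b$ must be positioned so that its \emph{leaf-slot}---the gap between the images of the two vertices of $T'$ flanking the position that $v$ occupies in $T$---contains all of the earlier points $u_1,\dots,u_{b-1}$; only then does coloring $u_au_b$ red place $u_a$ in the correct order position to serve as the leaf. The natural arrangement is to build the copies in a nested fashion, with $C_b$ straddling the region occupied by $C_1,\dots,C_{b-1}$: the vertices of $C_b$ lying below $v$'s position go to the left of that region and those above go to the right. When $v$ happens to be an extreme (leftmost or rightmost) vertex of $T$ this is automatic, since the leaf-slot is then unbounded on one side and the copies may be placed in disjoint consecutive blocks.

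I expect the main obstacle to be exactly this control over the relative order of the \emph{forced} red copies. Builder can force a red $T'$ to appear, but cannot obviously dictate where its vertices land: whenever Builder draws an edge intended to cross from the left region to the right region, Painter may simply color it blue, and such a blue edge spans the clique region rather than lying inside it, so it does not contribute to the blue $K_n$. The genuinely hard case is a tree all of whose leaves sit at interior positions with interior neighbors (such trees exist already on seven vertices), where no choice of $v$ makes the slot one-sided. To handle it I would strengthen the inductive hypothesis so that $\mathcal{S}'$ returns a red copy of $T'$ carrying prescribed positional data---for instance with the image of $u$ pinned to an extreme vertex of the copy, or with the copy confined so that its leaf-slot necessarily surrounds a designated interval---and then check carefully that this strengthening propagates through the recursion all the way down to the base case, where forcing a red edge with a prescribed left/right split is the delicate point.
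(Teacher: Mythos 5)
You have the right skeleton, and it is essentially the paper's (Lemma~\ref{lem:graph-with-leaf-and-complete}): delete a leaf $v$ with neighbor $u$, force $n$ red copies of $T'=T-v$, build a clique on the $n$ images of $u$, and win either by an all-blue $K_n$ or by a red clique edge that attaches one image of $u$ as the missing leaf of another copy; the recursion $r_o(T,K_n)\le \binom{n}{2}+n\cdot r_o(T',K_n)$ then gives $O(n^{\ell})$. But your argument stops exactly at the step that carries all the difficulty, and the fix you sketch does not work. You propose to strengthen the induction so that each new copy $C_b$ \emph{straddles} a region designated in advance (one containing $u_1,\dots,u_{b-1}$). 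That primitive is not forceable, already for $T'$ a single edge: if Builder's goal is ``a red edge crossing a prescribed middle interval, or a blue $K_n$,'' Painter can color every crossing edge blue and every non-crossing edge red; the blue graph is then bipartite, hence contains no $K_n$ for $n\ge 3$, and no red edge ever crosses the interval, so Painter survives forever. Thus the ``delicate point'' you flag at the base case is in fact a dead end, not a detail to be checked.

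The paper gets around this with a weaker primitive that is actually provable, combined with nesting in the opposite direction. Its Lemma~\ref{lem:restricted} shows there is a finite $N=N_{G,H}$ with $r_o(G,H)=r_o(G,H;N)$: Builder's optimal strategy can be assumed to use only $N$ positions, because the strategy tree has at most $2^{r_o}$ branches, each touching finitely many vertices. Since only relative order matters, Builder may run this confined strategy on \emph{any} prescribed set of $N$ positions---in particular on positions pre-stretched so that consecutive ones are separated by enormous blocks of fresh vertices, with block sizes given by a recursion $M_i=M_1+(M_1+1)M_{i-1}$. Now nest \emph{inward}: build copy $1$ on a stretched window; wherever it lands, its leaf-slot contains a huge block of untouched vertices, and copy $2$ is built entirely inside that block (again stretched), and so on. This reverses the quantifier that kills your plan: the region for the next copy is chosen \emph{after} the previous copy's location is known, so mere confinement suffices, whereas straddling would require the copy to surround an interval fixed \emph{before} it is built, which confinement cannot deliver. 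The clique step then runs with the roles flipped relative to your write-up: for $j_1<j_2$, the inner vertex $u_{j_2}$ lies in the leaf-slot of the outer copy $C_{j_1}$, so a red edge $u_{j_1}u_{j_2}$ completes $C_{j_1}$ to a red $T$. With the restricted-game lemma proved and the nesting inverted, your induction goes through and no strengthened hypothesis is needed.
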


In the next section we present the proofs of these results as well as some pertinent discussion.

\section{Proofs of Main Results}\label{sec:proofs_of_main_results}

\subsection{Proof of Theorem \ref{thm:degree_lb}}\label{subsec:degree_lb}

\begin{proof}
We present a simple and explicit strategy for Painter; namely, paint edge $ij$, where $i<j$, blue if $d^+(i)<d^-(j)$ and red if $d^+(i)\geq d^-(j)$. Suppose that a red copy of $G_1$ is created in the graph, so Builder wins. Let $v \in V(G_1)$ with $d^-_{G_1}(v) = \Delta^-(G_1)$. Enumerate the left-neighbors of $v$ as $x_1,\dots,x_{\Delta^-}$, in the order that they were connected to $v$ in Builder's construction of the monochromatic copy of $G_1$. Now for $1 \leq k \leq \Delta^- $, since the edge $(x_k,v)$ is red, we know $d^+(x_k)\geq k-1$, as $v$ already has $k-1$ left neighbors, namely $x_1,\dots,x_{k-1}$. Thus, we have
\begin{equation*}
    \sum_{k=1}^{\Delta^-}d^+(x_k) \geq \sum_{k=1}^{\Delta^-}(k-1)=\frac{\Delta^-(\Delta^--1)}{2}.
\end{equation*}
Since a single edge contributes to at most $2$ such summands, this implies that at least $\frac{\Delta^-(\Delta^--1)}{4}$ edges have been constructed by the time Builder wins the game. The case when a blue copy of $G_2$ is created instead is essentially the same, by considering a vertex $v$ with $d_{G_2}^+(v) = \Delta^+(G_2)$. As a result, the minimum of these two terms is a lower bound. 
\end{proof}

To consider comparable upper bounds, first note that the \textit{cover number}  $\tau(G)$ of $G$ is the minimum size of a vertex subset which has a nonempty intersection with every edge (a \textit{vertex cover}). Simple counting shows that
\begin{equation*}
    |E(G)| \leq \Delta(G)\tau(G).
\end{equation*}

In \cite{conlon2016ordered}, Conlon, Fox, Lee, and Sudakov showed that an ordered graph $G$ has $r_<$ linear in $|V(G)|$ if and only if $\tau(G)$ is bounded. For our purposes we specifically require the reverse direction.
\begin{thm}\label{thm:linear_covered}
    For each $\tau$, there exists a $c(\tau)$ so that every ordered graph $G$ on $n$ vertices with a vertex cover of size at most $\tau$ has
    \begin{equation*}
        r_<(G) \leq c(\tau)n.
    \end{equation*}
\end{thm}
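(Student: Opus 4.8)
The plan is to prove the diagonal bound $r_<(G) := r_<(G,G) \le c(\tau)n$ (the off-diagonal case is analogous) by exhibiting a monochromatic ordered copy of $G$ inside every $2$-coloring of $K_m$ with $m = c(\tau)n$. I would start from a structural reduction. Fix a vertex cover $W = \{w_1 < \dots < w_s\}$ with $s \le \tau$, so $I := V(G)\setminus W$ is independent and every edge meets $W$. The cover splits $[n]$ into $s+1$ consecutive gaps, and I classify each $v \in I$ by its \emph{type}: the pair recording which gap contains $v$ and the set $N_G(v) \subseteq W$ of its neighbors. There are at most $(\tau+1)2^{\tau}$ types, a constant. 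The crucial simplification is that in a prospective monochromatic copy only the \emph{edges} of $G$ need the target color, so for each $v \in I$ I must control only the colors of the at most $\tau$ edges from $v$ to the images of its neighbors in $W$; every other edge is free. Hence embedding $G$ reduces to finding \emph{anchors} $a_1 < \dots < a_s$ (the images of $w_1,\dots,w_s$) that realize all edges of the induced cover graph $G[W]$ in one color $\chi_0$, together with, inside each gap between consecutive anchors, a reservoir of $\chi_0$-neighbors of the appropriate anchors large enough to seat the independent vertices of each type living there.

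The target configuration is therefore an \emph{ordered monochromatic book}: a $\chi_0$-colored clique on the $s$ anchors whose common $\chi_0$-neighborhood is large and, moreover, distributes correctly among the $s+1$ gaps. Any $s$ vertices of a monochromatic clique, taken in their induced order, realize every edge of $G[W]$ in $\chi_0$ for free, so the work is in the pages. For the sheer \emph{size} of the common neighborhood I would use a Ramsey-multiplicity and averaging argument: in any $2$-coloring of $K_m$ the number of monochromatic copies of $K_{s+1}$ is $\Omega_{\tau}(m^{s+1})$, while there are at most $2\binom{m}{s}$ monochromatic $K_s$ spines, so on average a spine extends in its own color to $\Omega_{\tau}(m)$ vertices; a dependent-random-choice estimate on the majority-color graph gives the same conclusion. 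Once anchors with $\Omega_\tau(m)$ correctly placed common $\chi_0$-neighbors are in hand, each gap receives a reservoir linear in $m$, and I seat the independent vertices of that gap — each needing $\chi_0$-edges only to anchors lying \emph{outside} the gap — by sorting the reserved candidates and assigning greedily from left to right in the order the types occur. Because every reservoir is linear and the number of types is constant, this interval assignment succeeds with room to spare, and taking $c(\tau)$ large enough to absorb all the $\Omega_\tau(\cdot)$ losses finishes the embedding.

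The step I expect to be the main obstacle is producing the book with its pages in the \emph{right positions} and in a \emph{single} color. One cannot dodge the clique requirement by passing to an independent cover, since a minimum cover — the complement of a maximum independent set — may induce edges (already for a triangle, where $\tau=2$), so the anchors genuinely must be mutually $\chi_0$-joined. The delicate coordination is that the color $\chi_0$ is forced upon us both by the cover edges among the anchors and, simultaneously, by each gap's reservoir, and the averaging above controls total page count but not its split across the $s+1$ gaps. I would handle the positioning by pre-partitioning $[m]$ into alternating anchor-blocks and page-blocks so that gap membership is automatic, and then extract the anchors sequentially, refining all page-reservoirs to the $\chi_0$-neighborhood of each newly chosen anchor; the danger is that committing to $\chi_0$ for the clique may clash with the majority color of some reservoir, and a clean win-win (attempt $\chi_0$, and on a structured failure recover the other color) must be engineered so that only $O(\tau)$ constant-factor losses are incurred. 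All of this dependence is swept into $c(\tau)$, which may grow quickly in $\tau$ but is harmless since $\tau$ is fixed and the dependence on $n$ remains linear.
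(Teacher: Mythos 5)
The paper itself does not prove this statement; it is quoted from Conlon, Fox, Lee, and Sudakov \cite{conlon2016ordered}, so your attempt must be judged on its own terms. Your reduction is sound: passing to the complete split graph (anchors $a_1<\dots<a_s$ for the cover plus per-gap reservoirs of common same-color neighbors), and the observation that only the edges of $G$ need the target color. The multiplicity step is also correct: supersaturation gives $\Omega_\tau(m^{s+1})$ monochromatic copies of $K_{s+1}$, and averaging over the at most $2\binom{m}{s}$ monochromatic spines produces a spine with $\Omega_\tau(m)$ common same-color neighbors. But the argument then has a genuine gap, exactly at the step you yourself flag as the main obstacle: this averaging gives no control on how those pages split among the $s+1$ gaps determined by the spine (they could all lie to the right of every anchor), and neither of your proposed patches closes it. In particular, pre-partitioning $[m]$ into fixed anchor-blocks and page-blocks cannot work. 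Consider the coloring in which every edge with both endpoints in the union of the designated anchor-blocks is blue and every other edge is red: anchors drawn from the anchor-blocks then span only blue edges while all of their edges into every reservoir are red, so no single color $\chi_0$ can serve, and no recovery to the other color is possible while the anchors stay in their designated blocks (there are no red edges inside the union of the anchor-blocks at all). The desired configuration does exist in this coloring, but only with anchors sitting inside what you designated as page-blocks; hence the anchors must be located adaptively, and the ``win-win'' you defer to is precisely the substantive content of the theorem, not an engineering detail.

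The gap can, however, be closed inside your own framework by changing the configuration you count. Instead of monochromatic $K_{s+1}$ (spine plus a single page), count monochromatic \emph{ordered} copies of the interleaved skeleton $S$ on $2s+1$ vertices $p_0<a_1<p_1<a_2<\dots<a_s<p_s$, in which $a_1,\dots,a_s$ form a clique and each $p_j$ is joined to all of the $a_i$. Since $S$ sits inside an ordered clique on $2s+1$ vertices, every set of $r:=r(K_{2s+1})$ vertices contains a monochromatic ordered copy of $S$, and the same double count gives $\Omega_\tau(m^{2s+1})$ such copies. Averaging over the at most $2\binom{m}{s}$ pairs (anchor $s$-set, color) yields a pair $(A,\chi_0)$ lying in $\Omega_\tau(m^{s+1})$ copies. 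But the copies with anchor set $A$ and color $\chi_0$ correspond exactly to choices of one valid page per gap, so their number is $\prod_{j=0}^{s} g_j$, where $g_j$ counts the common $\chi_0$-neighbors of $A$ in gap $j$; as each $g_j\le m$, every single $g_j$ is $\Omega_\tau(m)\ge n$ once $m=c(\tau)n$. The embedding you describe then goes through verbatim: the $\chi_0$-clique on $A$ absorbs $G[W]$, and each independent vertex needs only $\chi_0$-edges to anchors. Counting the skeleton with one page in each gap builds the positional requirement into the supersaturated configuration, so the coordination problem you were trying to engineer away never arises.
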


Theorem \ref{thm:degree_lb} is most effective for graphs which are not only relatively sparse, but whose structure is dominated by a small number of vertices of large degree. In the regime that $V(G)$ and $\Delta(G)$ tend to $\infty$ at the same rate while $\tau(G)$ remains bounded, we see that $r_o(G) = \Theta(n^2)$ by combining Theorem \ref{thm:linear_covered} with \ref{thm:degree_lb}.

\subsection{Proof of Theorem \ref{thm:cycle-and-bipartite}}\label{subsec:cycle-and-bipartite}

\begin{proof}
   We give a strategy for Builder as follows. Builder first constructs an ordered copy of $K_{n,2n^2}$ without worrying how it is painted. Then this copy either contains a vertex on the left with red degree $2n$, or there are at least $2n^2-n(2n-1)=n$ vertices on the right which have blue degree $n$, giving the desired blue copy of $K_{n,n}$. Thus, we may assume there is a vertex $v$ on the left with $2n$ red right neighbors; call them $y_1<\dots<y_n<x_1<\dots<x_{n}$ in increasing order. 
   
   We now define sets $X_1<X_2<\dots<X_{k-2}$ of vertices inductively, where we say $A<B$ for two sets when all vertices in $A$ precede all vertices in $B$. First set $X_1=\{x_1,\dots,x_{n}\}$. Given $X_i$, let $X_{i+1}'$ be a set of $2n+1$ new vertices such that $X_i<X_{i+1}'$. Builder now constructs the complete bipartite graph on $(X_i,X_{i+1}')$. Either Painter colors these edges so that they contain a blue copy of $K_{n,n}$, in which case Builder wins immediately, or there are at least $n$ vertices in $X_{i+1}'$ with at least one red edge backwards to $X_i$; take these vertices to be the set $X_{i+1}$. It is clear by induction that each of the vertices in $X_{i}$ can be connected to $v$ by a monotone red path with $i$ edges. (These paths are not necessarily disjoint.) Finally, Builder constructs the copy of $K_{n,n}$ on $(\{y_1,y_2,\dots,y_n\},X_{k-2})$ to find a red copy of $C_k$ by travelling from $X_{k-2}$ to $v$, from $v$ to some $y_j$, and then from $y_j$ to $X_{k-2}$ along the red edge created in this final step.

   Now we analyze the number of turns Builder required for this process. The first stage required $2n^3$ edges, and we have taken $k-3$ steps to obtain $X_{k-2}$ from $X_1$, each building a $K_{n,2n-1}$, and then a final $K_{n,n}$ to connect $X_{k-2}$ to the $y_j$. This is a maximum of $2n^3+(k-3)n(2n-1)+n^2$ edges.
\end{proof}

\subsection{Proof of Theorem \ref{thm:tree-and-complete}}\label{subsec:tree-and-complete}

Our next result requires a small auxiliary lemma dealing with restricted Ramsey numbers. The online ordered Ramsey number $r_o(G,H)$ is obtained by playing the game on $\N$; we can also ask for $r_o(G,H;N)$, the number of turns when both players play optimally on ordered vertex set $[N]$. This number is defined for $N \geq r_<(G,H)$, since this is the regime where Builder is guaranteed to eventually win. More careful study of the behavior of the (unordered) restricted online Ramsey number $\tilde{r}(G,H;N)$ was suggested in \cite{conlon2018online} and begun in \cite{gonzalez2019} and \cite{briggs2020}.

It is clear that $r_o(G,H) \leq r_o(G,H;N)$; we are interested in selecting $N$ large enough so that equality holds. Since the winning graph can use see $2r_o(G,H)$ vertices, one might hope that $r_o(G,H) = r_o(G,H;2r_o(G,H))$, but this intuition from the unordered case does not hold because not all unused vertices are `interchangeable' to the players, due to the underlying vertex ordering. Instead we have the following qualitative version.

\begin{lemma}\label{lem:restricted}
    For ordered graphs $G$ and $H$, there exists $N_{G,H}$ so that $r_o(G,H) = r_o(G,H;N_{G,H})$.
\end{lemma}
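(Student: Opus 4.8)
The plan is to pass to a finite combinatorial game in which positions are recorded only up to \emph{order type}, show that $r_o(G,H)$ is at least its value, and then choose $N_{G,H}$ large enough that Builder can realize an optimal abstract strategy inside $[N_{G,H}]$. Since Builder can always win by constructing the complete ordered graph on $r_<(G,H)$ vertices and letting Painter color as it pleases, every play lasts at most $D := \binom{r_<(G,H)}{2}$ turns and touches at most $2D$ vertices. Crucially, whether the current partially colored ordered graph contains a red $G$ or a blue $H$ depends only on its isomorphism type as a colored ordered graph, so both the winning condition and the turn count are functions of order type alone. I would therefore define an abstract game whose states are two-colored ordered graphs on at most $2D$ vertices; a Builder move names an edge together with the slot (an existing vertex, a gap between two consecutive used vertices, or an end) of each of its at most two endpoints, and a Painter move assigns a color. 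Each state admits only finitely many order-type-distinct moves and the depth is at most $D$, so the game tree is finite and has a well-defined value $V$.

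Next I would show $V \le r_o(G,H)$ by projecting an optimal strategy for the game on $\N$. Given Builder's concrete positions, an abstract Painter's color choices can be applied verbatim to the concrete edges, so any abstract Painter lifts to a Painter on $\N$; feeding Builder's optimal $\N$-strategy these colors and reading off the order types of its replies yields an abstract Builder strategy that defeats every abstract Painter in at most $r_o(G,H)$ turns. Hence $V \le r_o(G,H)$. Note that no realizability is needed here, since we are passing from concrete play to abstract play and colors lift trivially.

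Finally I would transfer an optimal abstract Builder strategy $\sigma$ to the board $[N_{G,H}]$ with $N_{G,H} := 2^{2D+1}$. Treating the endpoints as $0$ and $N_{G,H}+1$ and always inserting a new vertex at the midpoint of the integer gap that $\sigma$ demands, each insertion at most halves a gap; since any play performs at most $2D$ insertions, no gap is exhausted before it is needed, so Builder can always realize the slot $\sigma$ prescribes with an honest integer of $[N_{G,H}]$. Thus Builder, simulating $\sigma$, defeats every Painter on $[N_{G,H}]$ within $V$ turns, giving $r_o(G,H;N_{G,H}) \le V \le r_o(G,H)$; combined with the reverse inequality $r_o(G,H) \le r_o(G,H;N)$ noted before the lemma, this forces equality at $N_{G,H}$ (which is certainly at least $r_<(G,H)$, so the restricted game is defined there).

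The step I expect to require the most care is this last realization argument: one must verify that an arbitrary Painter on $[N_{G,H}]$ induces a legitimate abstract Painter, so that $\sigma$'s guarantee genuinely applies to the simulated play, and that the room-accounting for the nested midpoint insertions holds uniformly across every branch of Builder's possible play rather than for one fixed sequence of moves. Pinning down the quantitative gap bound—confirming that $2^{2D+1}$ really suffices for all of the at most $2D$ potentially nested insertions regardless of Painter's responses—is the technical heart of the proof.
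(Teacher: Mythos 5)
Your proof is correct, but it takes a genuinely different (and heavier) route than the paper's. The paper's argument is much shorter: fix an optimal deterministic Builder strategy on $\N$; since the game it defines lasts at most $r_o := r_o(G,H)$ turns, the possible plays are indexed by Painter's color sequences $\{c_1,\dots,c_j\}$ with $j\le r_o$, so there are at most $2^{r_o}$ of them, each using finitely many vertices. Taking $N_{G,H}$ to be the largest vertex appearing in any of these finitely many plays, Builder can run the identical strategy inside $[N_{G,H}]$, giving $r_o(G,H;N_{G,H})\le r_o(G,H)$; the reverse inequality is the one noted before the lemma. In other words, the paper never needs your order-type abstraction or the midpoint re-embedding: it simply keeps the very vertices that Builder's $\N$-strategy would have used. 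What your detour buys is an explicit, strategy-independent bound $N_{G,H}\le 2^{2D+1}$ with $D=\binom{r_<(G,H)}{2}$, computable from $r_<(G,H)$ alone, whereas the paper's $N_{G,H}$ depends on which optimal strategy is fixed and is merely shown to be finite. Your two lifting steps are legitimate because the simulations are deterministic functions of the abstract history (so an abstract Painter induces a concrete one on $\N$, and a concrete Painter on $[N_{G,H}]$ induces an abstract one), and your gap accounting is sound: a gap with $g$ unused integers splits under midpoint insertion into gaps with at least $\lfloor (g-1)/2\rfloor$ unused integers, and at most $2D$ such halvings starting from $2^{2D+1}$ never exhaust a gap. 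Two minor formalization points you should patch: when a single Builder move places both new endpoints in the same gap, the abstract move must also record their relative order; and the claim that ``every play lasts at most $D$ turns'' should be stated as ``Builder has a strategy guaranteeing a win within $D$ turns,'' which is what actually bounds the value of the abstract game and makes its truncated game tree finite.
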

\begin{proof}
    The idea of the proof is that Builder can play in a way so that there are at most $2^{r_o(G,H)}$ possible games, each corresponding to a sequence of red/blue from Painter. Each one of these has a largest utilized vertex; $N_{G,H}$ can be taken as the largest of all these. We will make this idea precise. Let $r_o=r_o(G,H)$ for brevity.

    For each (possibly empty) sequence $\{c_1,c_2,\dots,c_{j} \}$ with $c_i \in \{\text{red},\text{blue}\}$ and $0 \leq j \leq r_o$, we will (inductively) define an ordered graph $G_{\{c_1,\dots,c_j\}}$. First, $G_{\{\}}$ is the graph on $\N$ containing only a single edge: the first move that Builder makes in their strategy witnessing $r_o(G,H)$. We obtain $G_{\{c_1,\dots,c_j\}}$ from $G_{\{c_1,\dots,c_{j-1}\}}$ by adding the edge which Builder constructed when Painter colored edge $j$ in red or blue, according to the value of $c_{j}$. Each of the $2^{r_o(G,H)}$ graphs $G_{\{c_1,\dots,c_{r_o}\}}$ contains a maximum vertex, and taking $N_{G,H}$ as the largest suffices. Builder can now play according to this process when the vertex set is restricted to $[N_{G,H}]$ and still ensure a win in $r_o$ turns.
\end{proof}

Theorem~\ref{thm:tree-and-complete} is now evident by applying the following more general lemma repeatedly.

\begin{lemma}\label{lem:graph-with-leaf-and-complete}
If ordered graph $G$ has a leaf $v$, then $r_o(G,K_n)\leq \binom{n}{2}+n\cdot r_o(G-v,K_n)$. 
\end{lemma}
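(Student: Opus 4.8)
The plan is to exhibit a Builder strategy whose total cost splits exactly into the two terms on the right-hand side: $n$ independent constructions of a red $G-v$, costing at most $n\cdot r_o(G-v,K_n)$ together, plus a single clique on $n$ vertices, costing $\binom{n}{2}$. The whole point is to arrange the $n$ copies so that the edges of that clique do double duty. Fix the leaf $v$ and its unique neighbor $u$, and assume $v$ lies to the right of $u$ in the ordering of $G$ (the other case is symmetric). For each copy of $G-v$ that Builder produces, let $u^*$ denote the image of $u$; Builder will eventually build the clique on the $n$ vertices $u_1^*,\dots,u_n^*$ coming from the $n$ copies. A blue clique there is a blue $K_n$, so Builder wins, while a red edge $u_i^*u_j^*$ with $u_i^*<u_j^*$ should complete a red $G$ by using copy $i$'s red $G-v$ and reading $u_j^*$ as the leaf $v$.

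First I would set up the positions. Builder builds the copies in a nested fashion: copy $1$ is constructed first, leaving a large empty gap at the order-position where $v$ would attach (the interval strictly between the images of the vertices immediately left and right of $v$); copy $2$ is then built entirely inside copy $1$'s gap, copy $3$ inside copy $2$'s gap, and so on. To build each copy, Builder simply replays an optimal strategy witnessing $r_o(G-v,K_n)$, restricted to the reserved gap; by Lemma~\ref{lem:restricted} the gap needs only finitely many ($N_{G-v,K_n}$) vertices, so finitely many integers per gap suffice and the whole nested family fits on $\N$. During any such construction, if Painter ever completes a blue $K_n$ then Builder wins immediately. Each copy costs at most $r_o(G-v,K_n)$ edges, for a total of at most $n\cdot r_o(G-v,K_n)$.

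The key structural observation I would then verify is that the nesting makes every $u_j^*$ with $j>i$ a valid position for the leaf $v$ relative to copy $i$. Because copy $j$ lies inside copy $i$'s reserved gap, all of its vertices — in particular $u_j^*$ — fall strictly to the right of $u_i^*$ (as $u$ sits left of $v$, so $u_i^*$ precedes the gap) and strictly between copy $i$'s images of the vertices immediately left and right of $v$. Hence the single extra edge $u_i^*u_j^*$, if red, extends copy $i$'s red $G-v$ to a genuine ordered red copy of $G$: the only edge added is $uv$, it maps to the red $u_i^*u_j^*$, and the ordering is respected. Finally Builder draws all $\binom{n}{2}$ edges among $u_1^*,\dots,u_n^*$; these are all new, since distinct copies are vertex-disjoint. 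If any is red, the observation yields a red $G$; if all are blue, they form a blue $K_n$. Either way Builder wins within $\binom{n}{2}+n\cdot r_o(G-v,K_n)$ turns.

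I expect the main obstacle to be the ordering bookkeeping: making precise that an arbitrarily placed leaf (not necessarily an extreme vertex of $G$) can be accommodated, which is exactly what the nested-gap construction is designed to handle, and confirming that replaying the optimal $(G-v,K_n)$ strategy inside a bounded reserved window really does cost at most $r_o(G-v,K_n)$. This is where Lemma~\ref{lem:restricted} is essential, since without a finite bound on the window one could not guarantee that $n$ nested copies fit. The symmetric case, with $v$ to the left of $u$, is handled by mirroring the nesting to the left and using decreasing images $u_1^*>\dots>u_n^*$.
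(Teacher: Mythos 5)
Your proposal is correct and takes essentially the same approach as the paper: build $n$ nested red copies of $G-v$ inside successive leaf-gaps (using Lemma~\ref{lem:restricted} to keep each window finite), then draw the clique on the $n$ images of the leaf's neighbor, so that any red edge completes an ordered red copy of $G$ while an all-blue outcome yields a blue $K_n$. The only detail the paper makes explicit that you leave informal is the window-size recursion $M_i = M_1+(M_1+1)M_{i-1}$, which quantifies your "finitely many integers per gap" remark --- note that outer gaps must be larger than $N_{G-v,K_n}$, since each must accommodate a spread-out restricted game plus all deeper nested gaps.
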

\begin{proof}
    We give a strategy for Builder to win with $\binom{n}{2}+n\cdot r_o(G-v,K_n)$ edges. 
    
    Let $v_1<v_2< \dots <v_k$ be the ordering of the vertices of $G$, and suppose $v=v_i$ is a leaf of $G$. Let $M_1:=N_{G-v,K_n}$ as furnished by Lemma \ref{lem:restricted}. Inductively define $M_i=M_1+(M_1+1)M_{i-1}$ for $2 \leq i \leq n$. Builder will create $n$ nested copies of $G-v$ as follows. First, Builder constructs a red copy of $G-v$ on $M_1$ vertices in $r_o(G-v, K_n)$ steps, where consecutive vertices are separated by blocks of $M_{n-1}$ vertices. Label the vertices of this copy of $G-v$ as $v^1_{1},\dots,v^1_{k}$. (Note that we do not yet have the leaf $v^1_{i}$.) Between $v^1_{i-1}$ and $v^1_{i+1}$ in that copy of $G-v$, there are at least $M_{n-1}$ unused vertices, upon which Builder creates another red copy of $G-v$, this time with vertices separated by blocks of $M_{n-2}$ vertices. This is possible by the recursive definition of the $M_i$. Label the vertices of this copy as $v^2_1,\dots,v^2_k$. Builder continues to create red copies of $G-v$ nested between vertices $v^j_{i-1}$ and $v^j_{i+1}$ of the previously built copy, a total of $n$ times, requiring $n\cdot r_o(G-v, K_n)$ edges. Note that if $i=1$, then Builder creates the copies of $G-v$ to the left of the previous copy, while if $i=k$, then Builder creates the copies of $G-v$ to the right of the previous copy.

    Suppose that $v_s$ is the unique neighbor of the leaf $v_i$ in $G$. Builder now creates a copy of $K_n$ on the $n$ vertices $v^j_{s}$ for $1 \leq j \leq n$, which requires an additional $\binom{n}{2}$ edges. Either Painter will color this copy of $K_n$ entirely blue, or there will be at least one red edge, say between $v^{j_1}_{s}$ and $v^{j_2}_{s}$ with $j_1<j_2$. Then $v^{j_2}_{s}$ serves the role of $v_{i}$ when attached to the copy of $G-v$ obtained at step $j_1$. Due to the nesting of our copies of $G-v$, this edge will complete the desired red copy of $G$. In either case, Builder wins. 
\end{proof}

\section{Acknowledgements}
This work was initiated at the 2023 Graduate Research Workshop in Combinatorics (GRWC), which was supported by the National Science Foundation under award \#1953985 and an award under the Combinatorics Foundation.
The authors thank the University of Wyoming for hosting the GRWC. 
We would like to thank Joel Jeffries and Shira Zerbib for helpful conversations at the start of the project. 

\bibliographystyle{abbrv}
\bibliography{bib.bib}

\end{document}